\newtheorem{theorem}{Theorem}[section]
\newtheorem{lemma}[theorem]{Lemma}
\newtheorem{claim}[theorem]{Claim}
\theoremstyle{definition}
\newtheorem{definition}[theorem]{Definition}
\newtheorem{example}[theorem]{Example}
\theoremstyle{remark}
\newtheorem*{remark*}{Remark}
\newcommand{\mscomm}[1]{}
\begin{document}

\Large



\title{A surface containing a line and a circle through each point\\ is a quadric}

\author{Fedor Nilov}
\address{Laboratory of Geometrical Methods in Mathematical Physics, Moscow State University}
\email{nilovfk AT mail DOT ru}

\author{Mikhail Skopenkov}
\address{King Abdullah University of Science and Technology}
\address{Institute for Information Transmission Problems of the Russian Academy of Sciences}
\email{skopenkov AT rambler DOT ru}

\thanks{The authors were supported in part by President of the Russian Federation grant MK-3965.2012.1. The second author was supported in part by ``Dynasty'' foundation and Simons--IUM fellowship.}

\begin{abstract} We prove that a surface in 3-dimensional Euclidean space
containing a line and a circle through each point is a quadric. We also give some particular results on the classification of surfaces containing several circles through each point.

\smallskip

\noindent{\bf Keywords}: ruled surface,
circular surface,
circle,
cyclide,
conic bundle.

\noindent{\bf 2010 MSC}: 14J26, 51M04.
\end{abstract}

\maketitle





\Large

\section{Introduction}\label{sec:introduction}

Surfaces generated by simplest curves (lines and circles) are popular subject in pure mathematics
and have applications to design and architecture
\cite{Pottmann-2007, Bo-etal}.
If a surface contains two such curves through each point
then we get a mesh on the surface.
Famous examples of such meshes 
are 
V.~G.~Shukhov's hyperboloid structures. 
A natural question is which other surfaces can be constructed from straight and circular beams.

It is well-known that a surface containing two lines through each point (\emph{doubly ruled surface}) must be a quadric.
In this paper we show that 
a smooth surface containing both a line and a circle through each point still must be a quadric; see Figure~\ref{fig1} to the left. By a \emph{smooth surface} we mean the image of an injective $C^\infty$ map $\mathbb{R}^2\to\mathbb{R}^3$ with nowhere vanishing differential.


\begin{theorem}\label{ruled} If through each point of a smooth surface in $\mathbb{R}^3$ one can draw both a straight line segment and a circular arc transversal to each other and fully contained in the surface (and continuously depending on the point)
then the surface is a piece of either a one-sheeted hyperboloid, or a quadratic cone, or an elliptic cylinder, or a plane.
\end{theorem}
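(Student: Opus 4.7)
My plan is to exploit the fact that the line foliation presents $S$ as a smooth ruled surface in $\mathbb{R}^3$, and then to use the classical local structure theorem: near a generic point, the line foliation belongs to one of (i) a plane, (ii) a generalized cylinder with parallel rulings, (iii) a generalized cone with concurrent rulings, (iv) a tangent developable of a space curve, or (v) a non-developable ruled surface, in which the tangent plane genuinely rotates along each ruling. I would parametrize $S$ by both foliations simultaneously as $(s,u)\mapsto F(s,u)$ with the slices $\{u=\Const\}$ tracing lines and the slices $\{s=\Const\}$ tracing circles; by transversality $F$ is a local diffeomorphism onto a neighborhood of a chosen point of $S$.

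In cases (i)--(iv) the surface is developable and the circle foliation is best attacked via plane sections. Each circle $C\subset S$ lies in some plane $\Pi$, and the set $S\cap\Pi$ is then determined by the cross-section of the developable, which forces rigid constraints: planar sections of a generalized cylinder are affine images of its cross-section, so the existence of a one-parameter family of circular sections forces the cross-section to be an ellipse, giving an elliptic cylinder; planar sections of a cone are projective images of its directrix, so the directrix must be a conic, giving a quadratic cone; and the tangent-developable case is ruled out by inspecting the cuspidal edge, across which no smooth family of circles covering a neighborhood of the edge can survive. This pins down the developable cases as exactly those appearing in the theorem statement.

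Case (v), the non-developable (``skew'') ruled surface, is the principal difficulty: here the Gaussian curvature is strictly negative, the tangent plane really rotates along each ruling, and $S$ is not \emph{a priori} algebraic. The approach I would try is to fix one ruling $\ell$ and, for each $p\in\ell$, record the plane $\Pi(p)$, the center $c(p)$, and the radius $r(p)$ of the circle $C_p$ through $p$; the conditions $C_p\subset S$, combined with the compatibility between the line and circle sweeps of $S$, should yield an overdetermined ordinary-differential system in these data, whose solutions I would then identify with the circle foliations of one-sheeted hyperboloids. A cleaner but more abstract alternative is to invert about a point off $S$: lines become circles through the inversion center, so the image carries two transverse circle foliations, which should fit into a cyclide picture hinted at in the abstract, after which the existence of a real line foliation forces a collapse of the quartic part of the defining equation and leaves only a quadric. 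Either way, the main obstacle is precisely case (v): promoting the smooth ``line plus circle through each point'' hypothesis on a skew ruled surface to a global algebraic rigidity of degree two is the technical heart of the theorem, and I do not see how to bypass either a direct ordinary-differential analysis along rulings or a nontrivial appeal to a cyclide classification to close this case.
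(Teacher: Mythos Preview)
Your plan diverges substantially from the paper's argument, and the central case~(v) contains a genuine gap.

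The paper does not use the developable/non-developable dichotomy at all. Instead it first shows that $\Phi$ lies on an irreducible ruled \emph{algebraic} surface $\bar\Phi\subset\mathbb{P}^3$, by taking three nearby generating circles $\gamma_1,\gamma_2,\gamma_3$ and observing that the rulings meeting all three sweep out an algebraic set. It then exploits the absolute conic $\{x^2+y^2+z^2=0,\ w=0\}$: every complex circle meets the absolute conic in two points, $\bar\Phi$ meets the absolute conic in only finitely many points, and continuity of the circle family forces all the circle planes to pass through a fixed conjugate pair at infinity---that is, the circle planes are \emph{parallel}. A short further argument shows that a generic circle plane meets $\bar\Phi$ in exactly that circle with multiplicity one, and B\'ezout gives $\deg\bar\Phi=2$.

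Your handling of the developable cases is mostly reasonable, though the dismissal of the tangent developable via its cuspidal edge is not quite right: the hypothesis gives only a smooth \emph{piece}, which may sit entirely away from the edge, so you still owe an argument that such a piece cannot carry a transverse circle family.

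The real problem is case~(v). You correctly flag it as the heart of the matter but do not close it, and of the two routes you sketch, the inversion idea fails as stated. After inverting you get a surface with two transverse circle families, but that alone does \emph{not} force a cyclide: the paper itself exhibits the translation surface $(x^2+y^2+z^2+3)^2-4y^2z^2-16x^2-12y^2=0$, covered by two families of circles yet not a cyclide. The cyclide conclusion requires the two families to be cospherical (or orthogonal), and inversion does not provide that. As for the ODE route along a ruling, the system you describe---tracking the plane, center, and radius of $C_p$ as $p$ moves along $\ell$---is not visibly overdetermined unless you first pin down the circle planes; without the parallelism lemma that the paper proves up front, the plane $\Pi(p)$ may tilt freely and the analysis has too many degrees of freedom. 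Establishing that the circle planes are parallel is exactly the missing idea, and the paper's absolute-conic argument is what supplies it.
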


\begin{figure}[hb]
\includegraphics[width=0.15\textwidth]{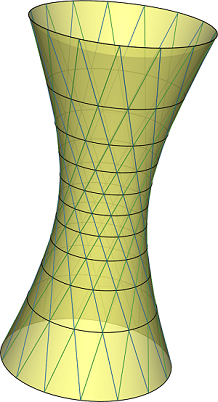}
\qquad
\includegraphics[width=0.33\textwidth]{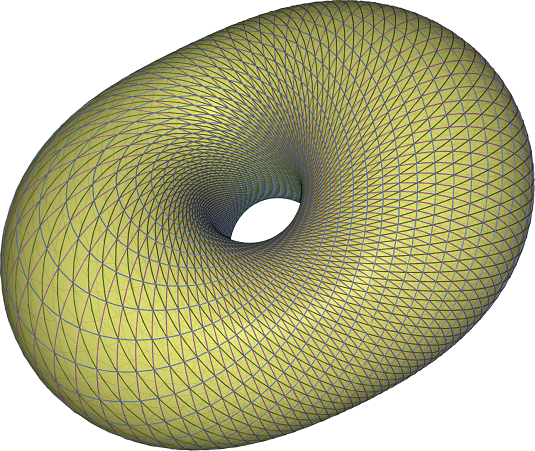}
\caption{
Left: A one-sheeted hyperboloid contains both a line and a circle through each point. To find all surfaces with this property (Theorem~\ref{ruled}), we prove that the planes of the generating circles are parallel (Lemma~\ref{parallel}) and intersect the surface only at the points of the circles (Lemma~\ref{circleonly}).
Right: A cyclide contains several circles through each point. Finding all surfaces with this property is a challenging open problem. So far we prove that a surface with $4$ circles through each point is a cyclide (Theorem~\ref{3families}).
}
\label{fig1}
\end{figure}

In what follows a line (circle) continuously depending on a real parameter is called a \emph{family} of lines (circles).

Although the proof is rather elementary, Theorem~\ref{ruled} is more tricky than the classical description of doubly ruled surfaces. Let us illustrate the difference. First, the classical result does not really require $2$ lines through \emph{each} point:
a surface covered by $1$ family of lines and containing just $3$ more lines intersecting them all must already be a quadric.
Second, the classical result remains true
in \emph{complex} 3-space.
However, similar generalizations of Theorem~\ref{ruled} are far from being true; see Examples~\ref{ex1}--\ref{ex5} below.

The next natural problem, which seems to be still open (and is going to be studied in detail in a subsequent publication), is to describe all surfaces containing several circles through each point.

An example of such surface is a \emph{cyclide}, i.e.,
the surface given by the equation of the form
\begin{equation}\label{D}
a(x^2+y^2+z^2)^2+(x^2+y^2+z^2)(bx+cy+dz)+Q(x,y,z)=0,
\end{equation}
where $a,b,c,d$ are constants and $Q(x,y,z)$ is a polynomial of degree at most $2$; see Figure~\ref{fig1} to the right. Such a surface is also called a \emph{Darboux cyclide}, not to be confused with a \emph{Dupin cyclide} being a particular case.
This class of surfaces appears in different branches of mathematics \cite{Maxwell, Moon-Spencer, Krasauskas-Zube}.
An introduction to cyclides and circles on them can be found in the work of Pottmann et~al.~\cite{Pottmann}. Any cyclide (besides some degenerate cases) 
contains at least $2$ circles through each point
~\cite{Coolidge, Pottmann}. 
Conversely, 
a surface
containing $2$ cospherical
or $2$ orthogonal circles through each point must be a cyclide; see Theorem~\ref{cospherical} below (taken from \cite{Coolidge}) and \cite[Theorems~1 and~2]{Ivey}. However, this is not true without the assumption of either cosphericity or orthogonality; see Example~\ref{ex4} below.

A torus is an example of a cyclide with $4$ circles through each point: a meridian, a parallel, and $2$ Villarceau circles. 
There are cyclides with $6$ circles through each point~\cite{Blum, Pottmann}. We give a short proof of the Takeuchi theorem \cite{Takeuchi} stating that a surface with $7$ circles through each point must be a sphere (Theorem~\ref{takeuchi} below), and we generalize an old Darboux result that a surface with sufficiently many circles through each point is a cyclide (Theorem~\ref{3families} below).

Further generalizations concern \emph{conic bundles}, in particular, surfaces containing a conic through each point. Surfaces containing both a line and a conic through each point were classified by Brauner \cite{Brauner}. Such a surface has degree at most $4$.
Notice that it is much more difficult to deduce Theorem~\ref{ruled} from this classification than to prove Theorem~\ref{ruled} itself.
Surfaces containing several conics through each point were classified by Schicho \cite{Schicho-01}. Such surfaces have degree at most $8$ and admit a biquadratic rational parametrization.
\emph{Webs} of circles and conics are discussed in \cite{Pottmann, Timorin-07, Shelekhov-07}.

The paper is organized as follows. In Section~\ref{sec:proofs} we prove Theorem~\ref{ruled}.
In Section~\ref{sec:variations} we state and prove some related results mentioned above.

\section{Proofs}\label{sec:proofs}

Throughout this section we work over the field of complex numbers except otherwise is explicitly indicated.
Denote by $\mathbb{P}^3$ the $3$-dimensional complex projective space with homogeneous coordinates $x:y:z:w$. The \emph{infinitely distant plane} is the plane $w=0$. The \emph{absolute conic} is given by the equations $x^2+y^2+z^2=0$, $w=0$. A 
(\emph{nondegenerate}) \emph{complex circle} is an irreducible conic in $\mathbb{P}^3$ having two 
distinct common points with the absolute conic.
Clearly, a circle in $\mathbb{R}^3$ is a subset of a complex circle.

The set of projective lines in $\mathbb{P}^3$ can be naturally identified with the \emph{Pl\"ucker quadric} $Gr(2,4)$ in $\mathbb{P}^5$: the line passing through points $x_1:y_1:z_1:w_1$ and $x_2:y_2:z_2:w_2$ is identified with the point
$x_1y_2-x_2y_1:x_1z_2-x_2z_1:x_1w_2-x_2w_1:y_1z_2-y_2z_1:y_1w_2-y_2w_1:z_1w_2-z_2w_1$.

Let $\Phi\subset\mathbb{R}^3$ be a surface covered by a family of real line segments and a family of real circular arcs simultaneously. The complex lines and complex circles containing the members of these families 
are called {\it generating lines} and {\it generating circles}, respectively.
Hereafter assume that $\Phi\subset\mathbb{R}^3$ is not a plane.

\begin{lemma} \label{parallel} The planes of the generating circles are parallel to each other.
\end{lemma}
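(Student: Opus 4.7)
The strategy is to pass to the complex projective closure in $\mathbb{P}^3$ and use the criterion that two real circles have parallel planes if and only if their projective closures meet the absolute conic in the same unordered pair of points --- the two \emph{circular points at infinity} of the circle. Thus the lemma reduces to showing that every generating circle has the same pair of circular points at infinity.

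I would fix a generating line $\ell$ and consider only the subfamily of generating circles $\{C_p\}_{p \in \ell}$ passing through the points of $\ell$; by connectedness of the full parameter space of generating circles, establishing parallelism within this subfamily gives the lemma. For each $p \in \ell$, let $\{I_1(p), I_2(p)\}$ denote the two circular points of the projective closure $\bar{C}_p$, and form the locus $\Delta = \bigcup_{p \in \ell}\{I_1(p), I_2(p)\}$. On the one hand, $\Delta$ lies in the absolute conic; on the other hand, since each $\bar{C}_p$ lies in $\bar\Phi$, it also lies in $\bar\Phi \cap \{w=0\}$.

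The key observation is that $\bar\Phi \cap \{w=0\}$ shares no irreducible component with the absolute conic. Indeed, every point of $\Phi$ lies on a real generating line, and each such line contributes its (real) direction, a point of $\mathbb{RP}^2 \subset \{w=0\}$, to $\bar\Phi \cap \{w=0\}$. But the absolute conic $x^2+y^2+z^2 = 0$ has no real points at all. So $\bar\Phi \cap \{w=0\}$ contains real points absent from the absolute conic, and the two can share no irreducible component in the plane $\{w=0\}$. Hence they intersect in only finitely many points, and the finite set $\bar\Phi\cap\{w=0\}\cap\{x^2+y^2+z^2=0\}$ contains $\Delta$. Continuity of $p \mapsto \{I_1(p), I_2(p)\}$ along the connected arc $\ell$ then forces the pair to be constant, which is precisely the statement that all circles $C_p$ have parallel planes.

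The main obstacle I anticipate is giving rigorous meaning to ``$\bar\Phi$'' for a merely $C^\infty$ surface: one cannot speak directly of its algebraic closure. A natural workaround is to replace $\bar\Phi$ by the honest complex-algebraic subset of $\mathbb{P}^3$ formed as the closure of the union of all complex projective extensions of the generating lines, and to verify that each $\bar{C}_p$ is contained in this union. A secondary subtlety is the case where the generating lines are all parallel or all concurrent, so that the trace of their directions at infinity reduces to a single point rather than a curve; in such configurations the surface is a cylinder or cone, and the conclusion can be checked directly.
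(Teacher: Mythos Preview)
Your overall strategy---passing to $\mathbb{P}^3$, showing that the algebraic closure $\bar\Phi$ meets the absolute conic in only finitely many points, and then invoking continuity of the circular-point pair---is exactly the paper's. But the step you label ``the key observation'' contains a genuine gap.

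You argue: $\bar\Phi \cap \{w=0\}$ contains real points (the directions of the real generating lines) while the absolute conic has none, hence the two share no irreducible component. This does not follow. The plane section $\bar\Phi \cap \{w=0\}$ may be reducible, with one component carrying the real direction points and the absolute conic sitting alongside as a \emph{separate}, purely non-real component. Nothing in your argument rules that out. Indeed, the paper's Example~\ref{ex1} exhibits an irreducible ruled degree-$4$ cyclide whose section at infinity is precisely the doubled absolute conic; what fails there is only that the rulings are not real, and your argument has not yet exploited reality in the right place.

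The paper closes this gap via Claims~\ref{algebraic}--\ref{doesnotcontain}. Claim~\ref{algebraic} does the work you anticipated as your ``main obstacle'': it shows $\Phi$ lies in an irreducible ruled algebraic surface $\bar\Phi$ and that the generating lines form an irreducible algebraic curve in $Gr(2,4)$. Claim~\ref{linecross} then observes that on such a surface every ruling meets every irreducible curve in $\bar\Phi$ that is not itself a ruling. Now if the absolute conic lay in $\bar\Phi$, every generating line---in particular every \emph{real} one---would have to meet it; but a real affine line meets $\{w=0\}$ in a single real point, and the absolute conic has no real points (Claim~\ref{doesnotcontain}). That is the actual contradiction, and it genuinely needs the ruled-surface structure, not merely the presence of real points at infinity.
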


To prove the lemma, we need several auxiliary claims. The first one is essentially known.



\begin{claim} \label{harris} \textup{(See \cite[Example~8.1]{Harris})}
Let $\gamma_1, \gamma_2, \gamma_3\subset \mathbb{P}^3$ be pairwise distinct irreducible algebraic curves.

\noindent\textup{(1)} The set $J(\gamma_1)\subset Gr(2,4)$ of all the lines passing through the curve $\gamma_1$ is an algebraic subset of $Gr(2,4)$.

\noindent\textup{(2)} The set ${J}(\gamma_1,\gamma_2)\subset Gr(2,4)$ of all the lines passing through each of the curves $\gamma_1, \gamma_2$ but not passing through their intersection $\gamma_1\cap \gamma_2$ is a piece of a $2$-dimensional algebraic surface
in $Gr(2,4)$.

\noindent\textup{(3)} The union $J (\gamma_1,\gamma_2,\gamma_3)\subset\mathbb{P}^3$ of all the lines passing through each of the curves $\gamma_1, \gamma_2, \gamma_3$ but not passing through their pairwise intersections is a piece of an algebraic surface
in $\mathbb{P}^3$.
\end{claim}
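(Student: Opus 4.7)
My plan is to handle all three parts via the standard incidence-correspondence technique. The central object is the flag variety $\mathcal F := \{(L,p) \in Gr(2,4) \times \mathbb{P}^3 : p \in L\}$; it is closed algebraic, cut out by bilinear equations in the Pl\"ucker coordinates of $L$ and the homogeneous coordinates of $p$, and both projections from $\mathcal F$ are proper, so they carry Zariski-closed sets to Zariski-closed sets. For part~(1), I intersect $\mathcal F$ with the closed subset $Gr(2,4) \times \gamma_1$ and project to the first factor; the image is exactly $J(\gamma_1)$, hence Zariski-closed.

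For part~(2), the key device is the rational map $\phi : \gamma_1 \times \gamma_2 \dashrightarrow Gr(2,4)$ sending $(p_1,p_2)$ to the line through $p_1$ and $p_2$. It is regular on the open complement of the diagonal (nonempty because $\gamma_1 \neq \gamma_2$) and generically injective, because the intersections $L \cap \gamma_i$ recover the pair $(p_1,p_2)$ for a generic transversal. Hence $\dim \overline{\mathrm{im}\,\phi} = \dim(\gamma_1 \times \gamma_2) = 2$. The image $\mathrm{im}\,\phi$ agrees with $J(\gamma_1) \cap J(\gamma_2)$ away from the finitely many cones of lines through the points of $\gamma_1 \cap \gamma_2$, and $J(\gamma_1,\gamma_2)$ is by definition this part, so it is a piece of the irreducible $2$-dimensional algebraic surface $\overline{\mathrm{im}\,\phi} \subset Gr(2,4)$.

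For part~(3), I work inside the irreducible surface $S := \overline{J(\gamma_1,\gamma_2)}$ from~(2) and impose the additional incidence with $\gamma_3$. A generic member of the two-parameter family $S$ fails to meet $\gamma_3$, since two curves of dimension~$1$ in the threefold $\mathbb{P}^3$ generically have empty intersection; hence $S \cap J(\gamma_3)$ is a proper closed subset of the irreducible surface $S$ and so has dimension at most~$1$. Finally, I form the incidence $\mathcal J := \{(L,q) \in (S \cap J(\gamma_3)) \times \mathbb{P}^3 : q \in L\}$, of dimension at most~$2$, and project it to $\mathbb{P}^3$; the projection is proper, so its image is Zariski-closed of dimension at most~$2$, and it contains the constructible set $J(\gamma_1,\gamma_2,\gamma_3)$, exhibiting the latter as a piece of an algebraic surface.

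The delicate step will be the dimension drop in part~(3): I really must verify that $S$ is not entirely contained in $J(\gamma_3)$, that is, that at least one transversal to $\gamma_1,\gamma_2$ misses $\gamma_3$. The generic-position argument sketched above handles the typical situation, but exceptional configurations (for instance $\gamma_3$ lying on a ruled surface canonically built from $\gamma_1$ and $\gamma_2$) would have to be argued away by hand; the removal of lines through the pairwise intersections $\gamma_i \cap \gamma_j$ built into the very definition of $J(\gamma_1,\gamma_2,\gamma_3)$ is what keeps this step clean.
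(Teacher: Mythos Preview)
Your treatment of parts~(1) and~(2) is essentially the paper's own: both use the point--line incidence variety for~(1) and the rational ``join'' map $\gamma_1\times\gamma_2\dashrightarrow Gr(2,4)$ for~(2), concluding that $\overline{J(\gamma_1,\gamma_2)}$ is an irreducible $2$-dimensional surface in $Gr(2,4)$.

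In part~(3), however, the gap you yourself flag is real, and your suggested fix does not close it. The assertion that ``a generic member of the two-parameter family $S$ fails to meet $\gamma_3$'' does not follow from the heuristic ``two curves in $\mathbb{P}^3$ generically do not meet'': the lines in $S=\overline{J(\gamma_1,\gamma_2)}$ are already constrained, and there are configurations where every one of them meets $\gamma_3$. The clearest instance is three curves $\gamma_1,\gamma_2,\gamma_3$ lying in a common plane $\Pi$: then $S$ is the pencil of all lines of $\Pi$, and $S\subset J(\gamma_3)$. Your incidence $\mathcal J$ then has dimension $3$, so the projection bound says nothing. Excising lines through the pairwise intersections $\gamma_i\cap\gamma_j$ does not help, since a generic line of $\Pi$ already avoids those finitely many points; that last sentence of your proposal is simply not correct.

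The paper resolves this case by a different mechanism. Rather than intersecting $S$ with $J(\gamma_3)$ alone, it takes the triple intersection $\bigcap_{i\ne j}\overline J(\gamma_i,\gamma_j)$ of the three irreducible $2$-dimensional surfaces. If this intersection is $2$-dimensional, irreducibility forces all three surfaces to coincide, and then the Trisecant Lemma of Kaminski--Kanel-Belov--Teicher (cited as \cite[Theorem~1, $n=3$]{Kaminski}) yields that $J(\gamma_1,\gamma_2,\gamma_3)$ is a piece of a plane. Otherwise the triple intersection has dimension $\le 1$, and then your incidence-and-project argument goes through verbatim. So the missing ingredient in your proof of~(3) is precisely this dichotomy together with the external Trisecant Lemma; without it, the coplanar (and similar) configurations are not covered.
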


\begin{remark*}
Moreover, one can check 
that ${J}(\gamma_1,\gamma_2)$ and $J (\gamma_1,\gamma_2,\gamma_3)$ are quasi-projective varieties.
\end{remark*}

\begin{proof}
(1) The set $S$ of pairs $(P,\lambda)\in \mathbb{P}^3\times Gr(2,4)$ such that the point $P$ belongs to the line $\lambda$ is algebraic. The set $J(\gamma_1)$ is the image of the projection $S\cap (\gamma_1\times Gr(2,4))\to Gr(2,4)$ and by ~\textup{~\cite[Theorem 3.12]{Harris}} is also algebraic.


(2) Consider the polynomial map $(\gamma_1-\gamma_1\cap\gamma_2)\times (\gamma_2-\gamma_1\cap\gamma_2)\to Gr(2,4)$ taking a pair of points $(P,Q)$ to the line passing through $P$ and $Q$. Then ${J}(\gamma_1,\gamma_2)$ is the image of this map. The map $(\gamma_1-\gamma_1\cap\gamma_2)\times (\gamma_2-\gamma_1\cap\gamma_2)\to
{J}(\gamma_1,\gamma_2)$ is a finite covering by
a piece of a 2-dimensional irreducible surface. Thus
${J}(\gamma_1,\gamma_2)$ is a piece of a 2-dimensional irreducible algebraic surface ${\bar{J}}(\gamma_1,\gamma_2)\subset Gr(2,4)$.
(3) By (2) the set $\bigcap
_{i\ne j} {J} (\gamma_i,\gamma_j)$
is a (possibly empty) piece of the algebraic set $\bigcap
_{i\ne j} {\bar{J}} (\gamma_i,\gamma_j)$. Since for each $i\ne j$  the surface ${\bar{J}} (\gamma_i,\gamma_j)$ is 2-dimensional and irreducible
it follows that $\dim \bigcap
_{i\ne j} {\bar{J}} (\gamma_i,\gamma_j)\leq 2$.
If $\dim \bigcap
_{i\ne j} {\bar{J}} (\gamma_i,\gamma_j)=2$
then ${\bar{J}} (\gamma_1,\gamma_2)={\bar{J}} (\gamma_2,\gamma_3)={\bar{J}} (\gamma_3,\gamma_1)$
and by~\textup{~\cite[Theorem 1, $n=3$]{Kaminski}} it follows that $J (\gamma_1, \gamma_2, \gamma_3)$ is a piece of a plane.
If $\dim\bigcap
_{i\ne j} {\bar{J}} (\gamma_i,\gamma_j)\leq 1$ then
$J (\gamma_1, \gamma_2, \gamma_3)$ is a piece of an algebraic surface as the image of the projection $S\cap (\mathbb{P}^3\times \bigcap_{i\ne j} J(\gamma_i,\gamma_j)) \to \mathbb{P}^3$.
\end{proof}

\begin{figure}[htbp]
\centering
\begin{overpic}[width=0.33\textwidth]{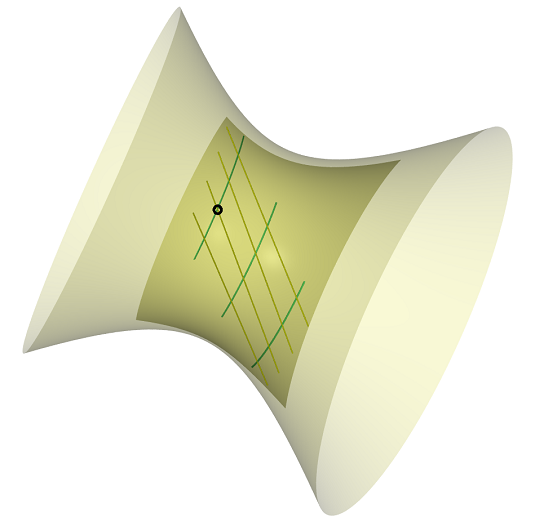}
\put(12, 36){$\bar{\Phi}$}
\put(63, 60) {$\Phi$}
\put(30, 49) {$P$}
\put(45, 66) {$\gamma_1$}
\put(51, 59) {$\gamma_2$}
\put(55, 47) {$\gamma_3$}
\end{overpic}
 \caption{To the proof of Claim~\ref{algebraic}.}

\label{Cl2_3}
\end{figure}




\begin{claim} \label{algebraic} \textup{(Cf.~\cite[Theorems 1 and 2]{Schicho-01})} The surface $\Phi\subset\mathbb{R}^3$ is contained in an irreducible ruled algebraic surface $\bar \Phi\subset \mathbb{P}^3$. The family of generating lines is a piece of an irreducible algebraic curve in $Gr(2,4)$.
\end{claim}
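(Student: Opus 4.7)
The plan is to reduce the claim to Claim~\ref{harris} by picking three generating circles through which every generating line must pass. Fix three generic distinct generating circles $\gamma_1,\gamma_2,\gamma_3$ on $\Phi$. Locally the two families form a transverse coordinate mesh on $\Phi$, so each generating line meets each generating circle at exactly one point of $\Phi$; in particular, every generating line belongs to $J(\gamma_1,\gamma_2,\gamma_3)$. By Claim~\ref{harris}(2), each set $\bar{J}(\gamma_i,\gamma_j)\subset Gr(2,4)$ is an irreducible algebraic surface of complex dimension $2$. I follow the dimension dichotomy in the proof of Claim~\ref{harris}(3): if $\dim\bigcap_{i\ne j}\bar{J}(\gamma_i,\gamma_j)=2$, then $\bar{J}(\gamma_1,\gamma_2)=\bar{J}(\gamma_2,\gamma_3)=\bar{J}(\gamma_3,\gamma_1)$ and the Kaminski result cited there forces $J(\gamma_1,\gamma_2,\gamma_3)$ to be a piece of a plane; but then $\Phi$ would lie in that plane, contradicting our standing assumption. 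Hence $\dim\bigcap_{i\ne j}\bar{J}(\gamma_i,\gamma_j)\le 1$.

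The family $\mathcal{L}$ of generating lines is a $1$-parameter family of distinct points of $Gr(2,4)$ contained in this algebraic set of dimension at most $1$, so its Zariski closure $\bar{\mathcal{L}}$ is an algebraic curve. I would then argue that $\bar{\mathcal{L}}$ is irreducible: smoothness of $\Phi$ yields a smooth parameterization $I\to Gr(2,4)$ of $\mathcal{L}$, and the connectedness of $I$ together with the fact that distinct irreducible components of $\bar{\mathcal{L}}$ meet only in finitely many points forces the smooth curve $\mathcal{L}$ to lie entirely in a single component $C$. This proves the second part of the claim.

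For the first part, define $\bar\Phi\subset\mathbb{P}^3$ as the image under the projection $\mathbb{P}^3\times Gr(2,4)\to\mathbb{P}^3$ of the incidence subvariety $S\cap(\mathbb{P}^3\times C)$, where $S=\{(P,\ell)\in\mathbb{P}^3\times Gr(2,4):P\in\ell\}$ is the incidence variety from the proof of Claim~\ref{harris}(1). Since $C$ is an irreducible algebraic curve and the restriction of $S$ over $C$ is an irreducible $\mathbb{P}^1$-bundle, the image $\bar\Phi$ is an irreducible algebraic set of dimension $2$, is ruled by construction, and contains $\Phi$ because every point of $\Phi$ lies on a generating line from $\mathcal{L}\subseteq C$.

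I expect the main obstacle to be the very first step, justifying rigorously that \emph{every} generating line meets each of the three chosen generating circles. This requires promoting the local coordinate description of the line-and-circle mesh on $\Phi$ to a global statement valid in the complex projective setting, and dealing with the pairwise intersection points of the $\gamma_i$ where the local picture degenerates.
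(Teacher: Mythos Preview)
Your overall scheme---three generating circles, then Claim~\ref{harris}(3) to get an algebraic surface and Claim~\ref{harris}(2) to get a curve in $Gr(2,4)$---is exactly the paper's, and you have correctly located the genuine gap. The paper does \emph{not} try to prove that every generating line meets three globally chosen circles; instead it runs a local-to-global argument that bypasses the difficulty. One fixes a point $P\in\Phi$, takes the arc $\gamma_1$ through $P$, draws the generating line segments through the points of $\gamma_1$, and then chooses nearby arcs $\gamma_2,\gamma_3$ that intersect \emph{those} segments (this is where transversality and smoothness of $\Phi$ are used). Thus only the generating lines through a neighborhood of $P$ are known to lie in $J(\bar\gamma_1,\bar\gamma_2,\bar\gamma_3)$, and the irreducible component $\bar\Phi$ and the irreducible curve $\alpha\subset Gr(2,4)$ that one extracts are a~priori attached to $P$. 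The paper then argues independence of $P$ separately: $\bar\Phi$ is independent of $P$ because the smooth surface $\Phi$ cannot jump from one irreducible algebraic surface to another; and $\alpha$ is independent of $P$ because otherwise $\bar\Phi$ would be doubly ruled, hence a quadric, hence its two rulings would be disjoint in $Gr(2,4)$, contradicting the continuity of the family of generating lines. This local-then-globalize step is the missing idea in your proposal.

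Two further points. First, the paper also handles a degenerate case you do not mention: it may happen that all the drawn segments near $P$ pass through one of the pairwise intersections $\bar\gamma_i\cap\bar\gamma_j$, so that $\Phi'=\emptyset$; in that case the segments form a quadratic cone, and one takes $\bar\Phi$ to be that cone directly. Second, your irreducibility argument for the curve in $Gr(2,4)$ relies on a \emph{smooth} parametrization $I\to Gr(2,4)$, but the hypothesis only gives continuous dependence of the line on the point, and a merely continuous path could in principle pass from one irreducible component to another through a crossing point. The paper's doubly-ruled argument above is what actually forces irreducibility.
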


\begin{proof}
Take a point $P\in \Phi$; see Figure~\ref{Cl2_3}. Draw a circular arc $\gamma_1\subset \Phi$ through the point $P$.
Draw the line segments in $\Phi$ from our continuous family through each point of $\gamma_1$. 
Since the drawn segments are transversal to $\gamma_1$, the circular arcs contained in $\Phi$ form a continuous family, and $\Phi$ is smooth it follows that there are arcs $\gamma_2,\gamma_3\subset \Phi$ (sufficiently close to $\gamma_1$) which intersect each of the drawn segments.
Let $\bar \gamma_1,\bar \gamma_2,\bar \gamma_3$ be the complex circles in $\mathbb{P}^3$ containing the arcs $\gamma_1,\gamma_2,\gamma_3$.
Let $\Phi'\subset\Phi$ be the union of those drawn segments,
whose lines do not pass through the intersections $\bar \gamma_1\cap\bar \gamma_2$, $\bar \gamma_2\cap\bar \gamma_3$, $\bar \gamma_3\cap\bar \gamma_1$.
By construction $\Phi'\subset J(\bar \gamma_1,\bar \gamma_2,\bar \gamma_3)$.
Thus by Claim~\ref{harris}(3) the collar $\Phi'$ is a piece of an algebraic surface. 
Take $\bar \Phi$ to be an irreducible component of the surface 
containing 
a closed 2-dimensional subset of the initial surface $\Phi$ including the point $P$. 
(If there are no such components, e.g., $\Phi'=\emptyset$, then the drawn segments sufficiently close to the point $P$ form a quadratic cone with vertex at one of the intersection points of the circles $\bar \gamma_1,\bar \gamma_2,\bar \gamma_3$; in this case set $\bar\Phi$ to be this cone.) The algebraic surface $\bar \Phi\subset\mathbb{P}^3$ does not depend on the point $P$ because the smooth surface $\Phi\subset\mathbb{R}^3$ cannot jump from one irreducible algebraic surface to another.

By Claim~\ref{harris}(2) the lines containing the drawn segments form a piece of the algebraic set $\bigcap_{i\ne j} {\bar J} (\gamma_i,\gamma_j)$. Since $\Phi$ is not a plane it follows that the latter set is an algebraic curve \cite[Theorem 1, $n=3$]{Kaminski}. Take $\alpha\subset Gr(2,4)$ to be an irreducible component of this curve containing the lines sufficiently close to the point $P$. Clearly, the union of the lines of the whole curve $\alpha$ covers $\bar \Phi$, i.e., $\bar\Phi$ is ruled. It remains to show that the curve $\alpha$ does not depend on the choice of the point $P$. Indeed, assume that the generating lines through a neighborhood of another point $P'$ form a curve $\alpha'\subset Gr(2,4)$ distinct from $\alpha$. Then $\bar \Phi$ is doubly ruled and hence it is a quadric. Thus $\alpha\cap\alpha'=\emptyset$ and hence the generating lines cannot form a continuous family. This contradiction proves the claim.
\end{proof}

Hereafter any line belonging to the irreducible algebraic curve in $Gr(2,4)$ containing the generating lines is also called a \emph{generating line}. No confusion will arise from this.

\begin{claim} \label{linecross} \textup{(Cf. \cite[Lemma~1.3]{Polo-Blanko-Put-Top-09})} If $\gamma\subset\bar\Phi$ is an irreducible algebraic curve distinct from a generating line then each generating line intersects $\gamma$.
\end{claim}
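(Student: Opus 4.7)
The plan is to study the set $L\subset\alpha$ of generating lines that meet $\gamma$, where $\alpha\subset Gr(2,4)$ is the irreducible algebraic curve of generating lines produced by Claim~\ref{algebraic}. The goal is to show $L=\alpha$, since this is exactly the statement that every generating line intersects $\gamma$.

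The first step is to verify that $L$ is Zariski-closed in $\alpha$. For this, I would introduce the incidence variety $I=\{(P,\ell)\in\mathbb{P}^3\times\alpha : P\in\ell\}$, which is algebraic, and its closed subset $I_\gamma:=I\cap(\gamma\times\alpha)$. The image of $I_\gamma$ under the second projection is precisely $L$, and this projection is closed because $\gamma$ is projective (cf.\ \cite[Theorem~3.12]{Harris}). Hence $L$ is closed in $\alpha$.

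The second step exploits irreducibility. Since $\bar\Phi$ is ruled by the lines of $\alpha$, every point of $\gamma\subset\bar\Phi$ lies on at least one generating line, so $L$ is nonempty. Because $\alpha$ is irreducible of dimension~$1$, the closed subset $L$ is either all of $\alpha$—which is what we want—or a finite set of points. In the latter case $\gamma$ would be contained in the union of the finitely many lines parametrized by $L$; irreducibility of $\gamma$ then forces it to coincide with one of these lines, contradicting the hypothesis that $\gamma$ is not a generating line.

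I do not anticipate a serious obstacle; the argument is a routine projection-plus-irreducibility dichotomy, closely modeled on the cited lemma of Polo-Blanco--Put--Top. The one point that needs minor care is that closedness of the projection $I_\gamma\to\alpha$ relies on $\gamma$ being a projective variety, which is automatic since $\gamma\subset\mathbb{P}^3$ is an algebraic curve, and on $\alpha$ being a Zariski-closed subset of $Gr(2,4)$, which is guaranteed by Claim~\ref{algebraic}.
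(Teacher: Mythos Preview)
Your proposal is correct and follows essentially the same approach as the paper: both show that the set of generating lines meeting $\gamma$ is a Zariski-closed subset of the irreducible curve $\alpha\subset Gr(2,4)$ (the paper phrases this as $\alpha\cap J(\gamma)$ via Claim~\ref{harris}(1), you via an incidence correspondence), and then use irreducibility of $\alpha$ together with the hypothesis that $\gamma$ is not a generating line to rule out the finite case. Your write-up is slightly more explicit about why the finite case is impossible, but the argument is the same.
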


\begin{proof}
Since $\bar\Phi\subset\mathbb{P}^3$ is ruled it follows that
there is a generating line through each point of $\gamma$.
Thus infinitely many generating lines belong to $J(\gamma)$. By Claim~\ref{harris}(1) the set $J(\gamma)$ is algebraic, hence the whole irreducible algebraic curve in $Gr(2,4)$ formed by generating lines is contained in $J(\gamma)$. \end{proof}

\begin{claim} \label{doesnotcontain} The surface $\bar \Phi\subset\mathbb{P}^3$ does not contain the absolute conic.
\end{claim}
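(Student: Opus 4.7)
The plan is to argue by contradiction, exploiting Claim~\ref{linecross} together with the fact that the generating lines come from an \emph{honest real} family of line segments in $\mathbb{R}^3$. Suppose, for the sake of contradiction, that the absolute conic $C_\infty=\{x^2+y^2+z^2=0,\,w=0\}$ is contained in $\bar\Phi$. Since $C_\infty$ is an irreducible (nondegenerate) conic, it is in particular an irreducible algebraic curve, and it is certainly distinct from any generating line. Thus Claim~\ref{linecross} applies with $\gamma=C_\infty$ and forces \emph{every} generating line to meet $C_\infty$.

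The goal is then to exhibit at least one generating line that cannot possibly meet $C_\infty$; this is the crux of the argument, but the real structure of the problem hands it to us. The continuous real family of segments guarantees (infinitely many) generating lines that are complexifications of real lines in $\mathbb{R}^3$. Pick any such line $\ell$. Since $\ell$ contains a real segment in $\mathbb{R}^3$, it is not contained in the plane at infinity $w=0$; consequently $\ell\cap\{w=0\}$ is a single projective point, and it is \emph{real}, namely the point $(a:b:c:0)$ determined by the real direction vector $(a,b,c)\in\mathbb{R}^3\setminus\{0\}$ of $\ell$.

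The final step is a triviality: for this point to lie on $C_\infty$ one would need $a^2+b^2+c^2=0$ with $(a,b,c)$ a nonzero real vector, which is impossible. Hence $\ell\cap C_\infty=\emptyset$, contradicting the consequence of Claim~\ref{linecross} extracted above, and therefore $\bar\Phi$ does not contain the absolute conic.

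The step I expect to be the only mildly delicate one is making sure the \emph{real} generating lines really do belong to the irreducible algebraic curve $\alpha\subset Gr(2,4)$ introduced in Claim~\ref{algebraic} — but this is immediate from its very construction as the irreducible component containing the lines of the real continuous family near $P$. Everything else is a direct application of Claim~\ref{linecross} combined with the elementary observation that a real projective point at infinity never lies on the absolute conic.
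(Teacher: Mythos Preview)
Your proof is correct and follows essentially the same route as the paper: assume the absolute conic lies in $\bar\Phi$, invoke Claim~\ref{linecross} to force every generating line to meet it, and then observe that a real generating line (coming from a real segment in $\mathbb{R}^3$) meets the plane at infinity in a single real point, which cannot lie on the absolute conic. The paper compresses the last step into the remark that the absolute conic has no real points, but the substance is identical.
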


\begin{proof}
Assume that $\bar\Phi$ contains the absolute conic $\gamma$. Then by Claim~\ref{linecross} all the generating lines intersect $\gamma$. Since $\gamma$ has no real points it follows that there are no real generating lines (except infinitely distant). This contradiction proves the claim.
\end{proof}

\begin{proof}[Proof of Lemma~\ref{parallel}]
By Claims~\ref{algebraic} and~\ref{doesnotcontain} the intersection of the surface $\bar \Phi\subset\mathbb{P}^3$ with the absolute conic is a finite set $I$. The plane of each generating circle intersects the infinitely distant plane by a line joining two points of the set $I$. Since the set $I$ is finite and the family of generating circles is continuous it follows that all these lines coincide, that is, all the planes of the circles are parallel.
\end{proof}

\begin{lemma} \label{circleonly} There are
infinitely many generating circles $\gamma$ such that the
projective plane $\Pi\subset\mathbb{P}^3$ of $\gamma$ intersects the
surface $\bar\Phi\subset\mathbb{P}^3$ only at the points of~$\gamma$.
\end{lemma}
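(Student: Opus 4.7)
My plan is to exploit the pencil structure from Lemma~\ref{parallel} and carry out a dimension count on residual curves. By Lemma~\ref{parallel}, the planes $\Pi_t$ of the generating circles $\gamma_t$ share a common line $\ell_\infty \subset \{w=0\}$ and form a pencil over $\mathbb{P}^1$. For each $t$ in a dense open subset, $\Pi_t \cap \bar\Phi$ contains $\gamma_t$ as a component; let $R_t$ denote the union of the remaining $1$-dimensional components of $\Pi_t \cap \bar\Phi$ taken with the reduced structure, so that $R_t = \emptyset$ is precisely the lemma's conclusion for that $t$. I would assume, for contradiction, that $R_t$ is a nonempty curve (necessarily sharing no component with $\gamma_t$) for all $t$ in a dense open subset of the parameter space.

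Next I would form the incidence variety $\mathcal{I} = \{(P, t) : P \in R_t\}$, whose generic fiber over $t$ is the $1$-dimensional curve $R_t$, giving $\dim \mathcal{I} = 2$. The projection $\mathcal{I} \to \bar\Phi$ has $0$-dimensional fibers over $\bar\Phi \setminus \ell_\infty$, since any two distinct planes of the pencil meet only in $\ell_\infty$. Hence the image of this projection is either Zariski-dense in $\bar\Phi$ (Case A) or else contained in $\ell_\infty$ (Case B).

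In Case A, a generic point $P \in \bar\Phi$ lies simultaneously on $\gamma_{t(P)}$ (the generating circle through $P$, where $t(P)$ is the unique pencil parameter with $P \in \Pi_{t(P)}$) and on $R_{t(P)}$. Since $\gamma_t$ and $R_t$ share no component in the plane $\Pi_t$, their intersection is finite by Bezout, so $\bigcup_t(\gamma_t \cap R_t)$ has dimension at most $1$, contradicting the fact that it contains a Zariski-dense subset of the $2$-dimensional $\bar\Phi$. In Case B, the inclusion $R_t \subseteq \ell_\infty$ together with $\dim R_t = 1$ forces $R_t = \ell_\infty$, so $\ell_\infty \subset \bar\Phi$. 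Applying Claim~\ref{linecross} with $\gamma = \ell_\infty$ (which is not itself a generating line, since real generating lines contain real finite points of $\Phi$ whereas $\ell_\infty$ has none) then shows that every generating line meets $\ell_\infty$ and hence has direction in $\ell_\infty$; such a line is therefore parallel to the pencil planes, lies in some $\Pi_t$, and so must be a linear component of $\Pi_t \cap \bar\Phi = \gamma_t \cup \ell_\infty$. Since the irreducible nondegenerate conic $\gamma_t$ contains no line, every generating line would equal $\ell_\infty$, contradicting the $1$-parameter family of distinct generating lines from Claim~\ref{algebraic}.

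The main obstacle I expect is Case B: the possibility that $\ell_\infty \subset \bar\Phi$ is not excluded by the earlier claims, and eliminating it requires carefully combining Claim~\ref{linecross} with the nondegeneracy of the generating circles, together with a supplementary argument (via real finite points of $\Phi$) that $\ell_\infty$ does not itself belong to the algebraic family $\alpha$ of generating lines.
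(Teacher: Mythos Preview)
Your approach—a dimension count over the pencil of planes through $\ell_\infty$—is genuinely different from the paper's, which first proves the auxiliary Claim~\ref{nogenerating} (the planes of infinitely many generating circles contain no generating line) and then applies Claim~\ref{linecross}: the ruling through any point of $\bar\Phi\cap\Pi$ meets $\gamma$, so if that point were off $\gamma$ the ruling would have two points in $\Pi$ and hence lie in $\Pi$.

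Two repairable issues. Your dichotomy is not literally exhaustive: if the image of $\mathcal I\to\bar\Phi$ is not dense one deduces only $\ell_\infty\subset R_t$ for generic $t$, not $R_t\subset\ell_\infty$; one should strip off $\ell_\infty$ and iterate on the remaining residual. And in Case~A the assertion that a generic \emph{complex} point $P$ lies on $\gamma_{t(P)}$ requires extending the conic family from the real parameter interval to a cofinite subset of $\mathbb{P}^1$, which follows from constructibility but should be said.

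The substantive gap is Case~B. Observe that if $\ell_\infty\subset\bar\Phi$ then $\ell_\infty\subset\Pi\cap\bar\Phi$ for every $\Pi$ in the pencil and the lemma fails outright, so Case~B is essential, not peripheral. Your argument there needs $\ell_\infty\notin\alpha$ to invoke Claim~\ref{linecross}, but the proposed justification (real generating lines pass through real finite points) shows only that $\ell_\infty$ is not one of the \emph{original} real rulings, not that it lies outside their Zariski closure $\alpha$. Indeed $\ell_\infty$ meets every generating circle at the two-point set $I$ on the absolute conic, so $\ell_\infty\in\bigcap_{i\ne j}\bar J(\bar\gamma_i,\bar\gamma_j)$ and could a priori lie on $\alpha$. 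Excluding this is precisely Case~3 of the paper's proof of Claim~\ref{nogenerating}, handled there by a delicate limiting argument with rulings $\lambda_t\to\ell_\infty$ and three noncoplanar generating circles; your outline offers no substitute for it.
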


To prove the lemma, we need the following auxiliary claim.

\begin{claim} \label{nogenerating} The projective planes $\Pi\subset\mathbb{P}^3$ of infinitely many generating circles $\gamma$ do not contain generating lines.
\end{claim}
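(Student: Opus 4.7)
The plan is to argue by contradiction: assume that only finitely many generating circles $\gamma$ have their plane $\Pi(\gamma)$ free of generating lines. Then for every generic generating circle $\gamma$ one can pick a generating line $\ell(\gamma)\subset\Pi(\gamma)$; fix one such pair $(\gamma_0,\ell_0)$ and write $\Pi_0:=\Pi(\gamma_0)$.

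The first step is to draw a generating circle through many points of $\ell_0$. Since $\ell_0$ is a generating line of the ruled surface $\bar\Phi$, we have $\ell_0\subset\bar\Phi$, and since the generating circles cover a dense open subset of $\bar\Phi$ (inherited from the real family covering $\Phi$ by algebraic closure), for generic $P\in\ell_0$ there is a generating circle $\gamma_P$ through $P$. By Lemma~\ref{parallel} the plane $\Pi(\gamma_P)$ is parallel to $\Pi_0$, so in $\mathbb{P}^3$ either the two planes coincide or they meet only along their common line at infinity. Since $P$ lies in both planes, this forces $\Pi(\gamma_P)=\Pi_0$ except possibly for the single value of $P$ in which $\ell_0$ meets the infinitely distant plane. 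Hence infinitely many of the circles $\gamma_P$ lie in the single plane $\Pi_0$.

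The contradiction now comes from a dimension count. The generating circles form a $1$-dimensional irreducible algebraic family (the analogue of Claim~\ref{algebraic} with the roles of lines and circles interchanged), and the map $\gamma\mapsto\Pi(\gamma)$ sends this family to the $1$-dimensional pencil of planes parallel to $\Pi_0$. This map cannot be constant, for otherwise $\bar\Phi$ would lie in a single plane, contrary to the standing hypothesis that $\Phi$ is not a plane. As a nonconstant algebraic map between irreducible curves it has finite fibers, so only finitely many generating circles can lie in $\Pi_0$. By pigeonhole, some single circle $\gamma^*\subset\Pi_0$ must therefore pass through infinitely many points of $\ell_0$, contradicting the fact that an irreducible conic and a line in the same plane meet in at most two points.

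The step I expect to be most delicate is the initial setup: one must confirm that the contradiction hypothesis genuinely supplies an algebraic correspondence $\gamma\mapsto\ell(\gamma)$ and that $(\gamma_0,\ell_0)$ can be chosen generically enough that $\ell_0$ does escape the exceptional subvariety where no generating circle passes. Both points follow from the algebraic structure of the families of generating lines and circles established in (the analogues of) Claim~\ref{algebraic}. Once this is in hand, the rest is just Lemma~\ref{parallel} together with the elementary intersection bound for a line and a conic in a common plane.
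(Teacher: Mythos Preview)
Your approach is appealing but has a genuine gap. You implicitly assume that $\ell_0$ meets the infinitely distant plane in a single point, i.e., that $\ell_0$ is not itself at infinity. But since all the planes $\Pi(\gamma)$ are parallel (Lemma~\ref{parallel}), they share a common line at infinity $\ell_\infty$, namely the line through the two points $I=\{P,Q\}$ where these planes meet the absolute conic. If $\ell_\infty$ happens to be a generating line, then it lies in \emph{every} $\Pi(\gamma)$; the contradiction hypothesis is then satisfied automatically, yet your argument yields nothing, because $\ell_0=\ell_\infty$ has no finite point and the implication ``$P$ finite $\Rightarrow \Pi(\gamma_P)=\Pi_0$'' never fires. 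The paper's proof singles out exactly this situation as its Case~3 and disposes of it by a limiting argument: a sequence of generating lines $\lambda_t\to\ell_\infty$ with $\lambda_t\cap I=\emptyset$ meets three pairwise noncoplanar generating circles in points converging to $I$, and a pigeonhole plus tangent-plane comparison produces the contradiction. Nothing in your outline substitutes for this.

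There is a second, softer gap in the step ``for generic $P\in\ell_0$ there is a generating circle $\gamma_P$ through $P$''. The paper only establishes that the original real family of circular arcs covers $\Phi$; it never extends the circle family to an irreducible algebraic family covering $\bar\Phi$ (the algebraic structure is set up only for the \emph{line} family, in Claim~\ref{algebraic}). So unless you separately prove a circle analogue of Claim~\ref{algebraic}, you cannot invoke circles through arbitrary points of a possibly non-real or non-original generating line $\ell_0$. The paper sidesteps this entirely by going in the opposite direction: rather than producing circles through points of $\ell_0$, it uses Claim~\ref{linecross} (available because the line family is algebraic) to force $\ell_0$ to meet every generating circle $\gamma'$, and then parallelism pins that intersection inside $I$. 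This is precisely what makes the case split on $|\lambda_\gamma\cap I|\in\{0,1,2\}$ the natural organization of the argument.
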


\begin{proof}
Assume that the projective planes of only finitely many generating circles do not contain generating lines.
Thus the projective planes $\Pi\subset\mathbb{P}^3$ of infinitely many generating circles $\gamma$ contain generating lines $\lambda_\gamma$.
By Lemma~\ref{parallel} all the projective planes $\Pi$ intersect
the absolute conic by the same $2$-point set $I=\{P,Q\}$. It suffices to consider the following $3$ cases.

Case 1: \emph{For some $\gamma$ we have $\lambda_\gamma\cap I=\emptyset$}. Take a generating circle $\gamma'\not\subset\Pi$. Then $\Pi\cap\gamma'=I$ by Lemma~\ref{parallel}. Then by Claim~\ref{linecross} we have $\emptyset\ne \lambda_\gamma\cap\gamma'\subset \Pi\cap\gamma'-I=\emptyset$, a contradiction.

Case 2: \emph{For infinitely many $\gamma$ the intersection $\lambda_\gamma\cap I$ consists of a single point}. All the lines $\lambda_\gamma$ with this property are pairwise distinct because by Lemma~\ref{parallel} they are contained in the planes through $I$. We get infinitely many generating lines intersecting $I$. Thus by Claim~\ref{algebraic} \emph{each} generating line must intersect $I$. Then the generating lines through each point of a generating circle $\gamma$ must intersect $I$, a contradiction.

\begin{figure}[htbp]
\centering
\begin{overpic}[width=0.72\textwidth]{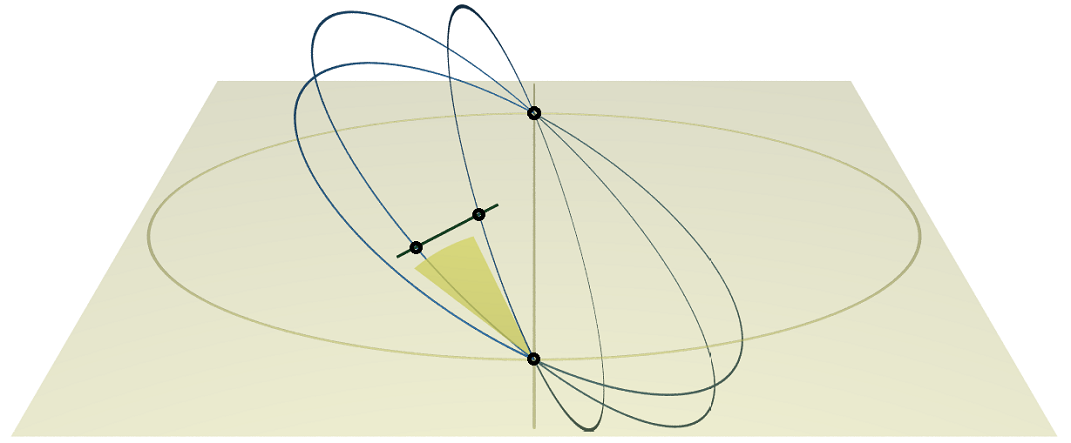}
\put(50, 9) {$P$}
\put(46.9, 28) {$Q$}
\put(37.5, 20.5) {$P_{t}^{2}$}
\put(44, 23.5) {$P_{t}^{1}$}
\put(43, 14) {$\Omega$}
\put(32.6, 14.8) {$\lambda_t$}

\put(26.9, 22) {$\gamma_3$}
\put(33, 26) {$\gamma_2$}
\put(39.6, 27.6) {$\gamma_1$}

\put(80.2, 5.3) {$w=0$}

\put(45.8, 3) {$\lambda_\gamma$}

\end{overpic}
\caption{To the proof of Claim~\ref{nogenerating} case (3).}
\label{Cl2_7}
\end{figure}
Case 3: \emph{For some $\gamma$ we have $\lambda_\gamma\cap I=I$}; see Figure~\ref{Cl2_7}. Then $\lambda_\gamma$ is the infinitely distant line 
of the projective plane $\Pi$.
Since the generating lines form an algebraic curve in $Gr(2,4)$ it follows that there is a sequence of generating lines
$\lambda_t\ne \lambda_\gamma$ converging to $\lambda_\gamma$.
Since there are only finitely many generating lines crossing $I$, we may assume that $\lambda_t\cap I=\emptyset$. Take $3$ pairwise noncoplanar generating circles $\gamma_1$, $\gamma_2$, and $\gamma_3$.
By Claim~\ref{linecross} for each $i=1,2,3$ the line $\lambda_t$ intersects the circle $\gamma_i$ at some point $P_t^i$. Each of the $3$ points $P_t^i$ converges to one of the $2$ points of the set $I$. By the pigeonhole principle
we may assume that, say, $P_t^1,P_t^2$ converge to $P$.
Then the plane $PP_t^1P^2_t$ converges to the projective plane $\Omega$ containing projective tangent lines to $\gamma_1$ and $\gamma_2$ at
the point $P$ (the tangent lines are distinct because $\gamma_1$ and $\gamma_2$ are not coplanar).
The projective plane $\Omega$ has a unique common point with $\gamma_1$ while $\lambda_\gamma\subset\Omega$ intersects $\gamma_1$ by the $2$-point set $I$. This contradiction proves the claim.
\end{proof}

\begin{proof}[Proof of Lemma~\ref{circleonly}] 
By Claim~\ref{nogenerating} there are infinitely many generating circles $\gamma$ such that $\bar\Phi\cap\Pi$ does not contain generating lines. Then $\bar\Phi\cap\Pi=\gamma$ because by Claim~\ref{linecross} the generating line through each point of $\bar\Phi\cap\Pi$ crosses $\gamma$.
\end{proof}

\begin{proof}[Proof of Theorem~\ref{ruled}]
By Lemma~\ref{circleonly} we have $\bar\Phi\cap\Pi=\gamma$ for infinitely many generating circles $\gamma$. So there is a generating circle $\gamma$ with this property which is not a singular curve of the surface $\bar\Phi$, because $\bar\Phi$ contains only finitely many singular curves. By Claim~\ref{nogenerating} the plane $\Pi$ does not touch the surface $\bar\Phi$ along the curve $\gamma$. Thus the circle $\gamma$ has multiplicity $1$ in the curve $\bar\Phi\cap\Pi$. By the Bezout theorem \cite[Theorem 18.3]{Harris} the degree of the surface $\bar\Phi\subset\mathbb{P}^3$ equals to the degree of its planar section (with multiplicity), and thus equals to $2$. Since $\bar\Phi$ contains both real lines and real circles, it is either a one-sheeted hyperboloid, or a quadratic cone, or an elliptic cylinder. Theorem~\ref{ruled} is proved.
\end{proof}


\section{Variations}\label{sec:variations}

In this subsection we prove some related results. Throughout this section we work over the field of real numbers except otherwise explicitly indicated. First let us give a simple proof of the following theorem.

\begin{theorem}[Takeuchi, 1987, \cite{Takeuchi}] \label{takeuchi} Let $\Phi\subset\mathbb{R}^3$ be a smooth closed surface homeomorphic to either a sphere or a torus. If through each point of the surface one can draw at least $7$ distinct circles fully contained in the surface (and continuously depending on the point) then the surface is a round sphere.
\end{theorem}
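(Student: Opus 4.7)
The plan is to reduce to the setting of a Darboux cyclide via Theorem~\ref{3families}, and then exploit the classical fact that a non-spherical Darboux cyclide carries at most six one-parameter families of circles. Since seven circles pass through each point of $\Phi$, in particular four do, so by Theorem~\ref{3families} the surface $\Phi$ is contained in an irreducible Darboux cyclide $\bar\Phi\subset\mathbb{P}^3_\mathbb{C}$ of the form~\eqref{D}, with $\deg\bar\Phi\le 4$.

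Next, I would adapt the proof of Lemma~\ref{parallel} to the present setting by replacing the single generating line through each point with a circle from a second transverse family; this is possible because the algebraization machinery of Claim~\ref{algebraic} and Claim~\ref{linecross} is based on the finiteness of $\bar\Phi\cap(\text{absolute conic})$ away from intrinsic components, and works verbatim with two transverse circle families in place of a line--circle pair. The conclusion is that the planes of the circles within each of the seven families share a common line at infinity $\ell_j\subset\{w=0\}$, equivalently, the same two cyclic points on the absolute conic are shared by all circles in each family.

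For each line $\ell$ supporting a full family, the pencil of planes through $\ell$ must cut $\bar\Phi$ in reducible quartic plane curves, each decomposing as a circle (from the family, passing through the two cyclic points of $\ell$) plus a residual conic. This ``everywhere-reducible'' condition, translated into the coefficients of~\eqref{D} via a resultant-type calculation, admits at most six solutions $\ell$ for any non-spherical Darboux cyclide; the extremum six is realized by the Blum--Pottmann cyclides mentioned in Section~\ref{sec:introduction}. Hence the hypothesis of seven families forces the equation~\eqref{D} of $\bar\Phi$ to degenerate to that of a sphere. The topological assumption that $\Phi$ is homeomorphic to $S^2$ or $T^2$, together with $\Phi\subset\bar\Phi$ being closed, connected, and smooth, then forces $\Phi$ to coincide with this sphere.

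The main obstacle is the sharp bound of six circle families on a non-spherical Darboux cyclide. A clean proof likely proceeds either through a direct algebraic analysis of the reducibility locus of the pencil of quartic plane sections through a line at infinity (yielding a polynomial condition of degree six on the coordinates of $\ell$), or more elegantly through the M\"obius-geometric model of cyclides as intersections of two quadrics in $\mathbb{P}^4$, where circle families correspond to one-parameter families of isotropic lines.
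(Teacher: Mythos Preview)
Your proposal is genuinely different from the paper's proof, but it has a real gap.

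\medskip

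\textbf{The paper's argument.} The paper gives a short, purely topological-plus-elementary proof that does not pass through the cyclide classification at all. Using $H_1(\Phi;\mathbb{Z}/2)$ (Claim~\ref{topological}), from the $7$ families one extracts $3$ families whose circles meet pairwise in an even number of points. Transversality at a generic point (Claim~\ref{generic}) forces two common points, hence cosphericity. Then $\alpha_0,\beta_0$ lie on one sphere $S$, and each nearby $\gamma_t$ meets $S$ in at least $3$ points, so $\gamma_t\subset S$; smoothness of $\Phi$ then gives $\Phi=S$. No structure theory of cyclides is used.

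\medskip

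\textbf{Where your argument breaks.} Your reduction via Theorem~\ref{3families} to a Darboux cyclide is legitimate (that theorem does not rely on Theorem~\ref{takeuchi}). But the decisive step---``a non-spherical Darboux cyclide carries at most six real one-parameter families of circles''---is precisely the content of Takeuchi's theorem in the cyclide case, and you do not prove it. You explicitly flag it as ``the main obstacle'' and only sketch two possible routes (a resultant computation on reducible plane sections, or the $\mathbb{P}^4$ model). Neither sketch is carried out, and neither is short: the count of real circle families on a smooth Darboux cyclide depends on the signature/reality configuration of the pencil of quadrics, and getting the sharp bound $6$ requires a genuine case analysis (this is essentially what \cite{Blum,Pottmann,Takeuchi} do). As written, the proposal trades the theorem for an unproved lemma of comparable depth.

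A secondary issue: the claim that the proof of Lemma~\ref{parallel} ``works verbatim'' with a second circle family in place of the line family is not correct. The original argument hinges on Claim~\ref{doesnotcontain}, whose proof uses that a \emph{real line} cannot meet the absolute conic; every complex circle, by definition, does meet it. One can still show that the circles of one family on a cyclide lie in a pencil of planes, but this requires a different argument (e.g.\ via the $\mathbb{P}^4$ model or the Darboux transformation used in the proof of Theorem~\ref{cospherical}), not a verbatim transplant of Claims~\ref{linecross}--\ref{doesnotcontain}.

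\medskip

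\textbf{Summary.} Your outline could in principle be completed, but it routes through the hardest part of the classical theory (the sharp $6$-family bound on cyclides), which you leave open. The paper's proof bypasses all of this with a three-line mod-$2$ homology argument plus elementary cosphericity; that is the idea you are missing.
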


The proof just like the original one is topological. We simplify the original proof drastically using homology modulo $2$ instead of integral homology.

\begin{claim} \label{topological} From any $n$ smooth closed curves in the surface $\Phi$ intersecting pairwise in finitely many points
one can choose greater or equal to $n/3$ curves intersecting pairwise in an {even} number of points (counted with multiplicities).
\end{claim}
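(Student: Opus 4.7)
The plan is to pass to mod~$2$ homology and use the intersection pairing on $H_1(\Phi;\mathbb{Z}/2)$: for any two smooth closed curves $\gamma_1,\gamma_2$ meeting in finitely many points on $\Phi$, the total count of intersection points with multiplicities is congruent modulo~$2$ to $[\gamma_1]\cdot[\gamma_2]\in\mathbb{Z}/2$. When $\Phi$ is a sphere, $H_1(S^2;\mathbb{Z}/2)=0$, so every pair of curves intersects in an even number of points and we may keep all $n$ of them.

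The essential case is the torus, where $H_1(T^2;\mathbb{Z}/2)\cong(\mathbb{Z}/2)^2$ with a basis $a,b$ satisfying $a\cdot a=b\cdot b=0$ and $a\cdot b=1$, so $(a+b)\cdot(a+b)=0$. In particular every class is isotropic, so any two curves representing the same homology class automatically meet in an even total number of points. I would partition the $n$ given curves according to their class $0,a,b,a+b$, calling the corresponding sizes $n_0,n_a,n_b,n_{a+b}$, with $n_0+n_a+n_b+n_{a+b}=n$.

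I would then single out the three subcollections whose classes lie in $\{0,a\}$, $\{0,b\}$, and $\{0,a+b\}$, respectively. Inside each such subcollection all pairwise intersection pairings vanish by the computation above, so every pair of curves in it meets in an even number of points. The sizes of the three subcollections add up to $(n_0+n_a)+(n_0+n_b)+(n_0+n_{a+b})=2n_0+n\ge n$, so by the pigeonhole principle at least one subcollection contains $\ge n/3$ curves, yielding the desired family.

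The only delicate point I expect is the ``counted with multiplicities'' convention at non-transverse intersections: at a tangency the local intersection number can exceed~$1$, but the sum of local mod~$2$ multiplicities still agrees with the homological intersection pairing, so the parity statement that drives the entire argument remains correct. Everything else reduces to the standard computation of $H_1(S^2;\mathbb{Z}/2)$ and $H_1(T^2;\mathbb{Z}/2)$ together with a one-line pigeonhole count, none of which looks like a serious obstacle.
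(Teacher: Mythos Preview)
Your proposal is correct and follows essentially the same route as the paper: pass to $H_1(\Phi;\mathbb{Z}/2)$, observe that on the sphere everything is null-homologous while on the torus the four classes $0,a,b,a+b$ are all isotropic, and apply pigeonhole to the three two-element subgroups $\{0,a\},\{0,b\},\{0,a+b\}$. Your explicit count $(n_0+n_a)+(n_0+n_b)+(n_0+n_{a+b})=2n_0+n\ge n$ just spells out the pigeonhole step the paper states in one sentence; otherwise the arguments are the same.
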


\begin{proof} 
If $\Phi$ is topologically a sphere then any two curves in $\Phi$ intersect in an {even} number of points (with multiplicities), and the claim follows.
Assume that $\Phi$ is topologically a torus, and $\xi$ and $\eta$ are its ``meridian'' and ``parallel'' curves.
Then the 1st homology group of $\Phi$ with coefficients modulo $2$ consists of $4$ homology classes: $0$, $[\xi]$, $[\eta]$, $[\xi]+[\eta]$. By the pigeonhole principle it follows that one of the $3$ sets $\{0,[\xi]\}$, $\{0,[\eta]\}$, $\{0,[\xi]+[\eta]\}$ contains the homology classes of greater or equal to $n/3$ of the given $n$ curves. Since $[\xi]\cap[\xi]=[\eta]\cap[\eta]=
([\xi]+[\eta])\cap ([\xi]+[\eta])=0$ it follows that the curves from the same set intersect each other in an even number of points.
\end{proof}

\begin{figure}[htbp]
\centering
\begin{overpic}[width=0.33\textwidth]{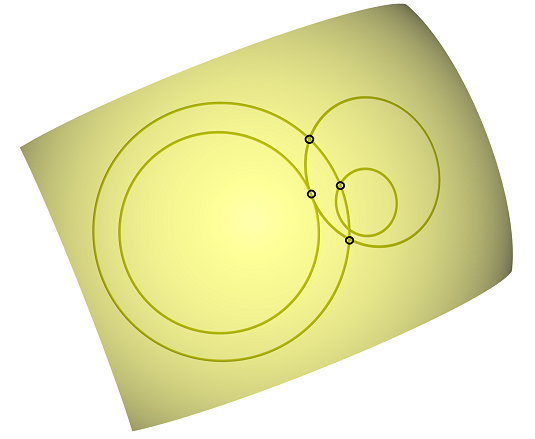}
\put(25, 6) {$\Phi$}
\put(51, 41) {$P$}
\put(65, 43) {$P'$}
\put(48, 28) {$\alpha_{0}$}
\put(60, 21) {$\alpha_{\epsilon}$}
\put(74, 43) {$\beta_{\epsilon}$}
\put(81, 40) {$\beta_{0}$}
\end{overpic}
\caption{To the proof of Claim~\ref{generic}.}
\label{Cl3_3}
\end{figure}

\begin{claim}\label{generic} Two circles passing through a generic point $P\in \Phi$ are transversal.
\end{claim}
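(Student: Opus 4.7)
The plan is to argue by contradiction. Suppose the set of points $P\in\Phi$ at which some two of the seven circles through $P$ are tangent at $P$ has nonempty interior $U\subset\Phi$. Since there are only finitely many ordered pairs of families among the seven, continuity and the pigeonhole principle let me shrink $U$ and fix two of the families, call them $\alpha$ and $\beta$, so that $\alpha_P\neq\beta_P$ and the circles $\alpha_P$ and $\beta_P$ are tangent at $P$ for every $P\in U$.

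The first step is to observe that each continuous family of circles is locally a foliation of $\Phi$: whenever a point $Q\in\alpha_P$ is sufficiently close to $P$, one has $\alpha_Q=\alpha_P$, since the single-valued continuous assignment $R\mapsto\alpha_R$ cannot jump to a nearby but distinct circle through $Q$. The same holds for $\beta$, and so on $U$ each of the two families partitions a neighborhood into circle leaves.

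The main step is a Bezout-style count, illustrated in Figure~\ref{Cl3_3}. Fix $P\in U$ and let $\alpha_0=\alpha_P$, $\beta_0=\beta_P$; being distinct and tangent at $P$, their intersection in $\mathbb{P}^3$ is exactly $\{P\}$ with multiplicity two. Perturb within the family $\alpha$ to a nearby circle $\alpha_\epsilon=\alpha_{P_\epsilon}$ through a point $P_\epsilon\in U$ close to $P$ and lying off $\alpha_0$ (possible by the foliation property). For one of the two perturbation directions the double intersection at $P$ splits into two distinct real points $P_\epsilon^1,P_\epsilon^2\in\alpha_\epsilon\cap\beta_0$ close to $P$, both lying in $U$ for small $\epsilon$. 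The foliation property gives $\alpha_{P_\epsilon^i}=\alpha_\epsilon$ and $\beta_{P_\epsilon^i}=\beta_0$, so the tangency hypothesis on $U$ forces $\alpha_\epsilon$ and $\beta_0$ to be tangent at \emph{both} $P_\epsilon^1$ and $P_\epsilon^2$. But after a generic perturbation the plane of $\alpha_\epsilon$ differs from that of $\beta_0$, and two distinct non-coplanar circles intersect in at most two points (their intersection lies on the line of intersection of their two planes); two tangencies would contribute intersection multiplicity at least four, a contradiction.

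The main obstacle is the real-perturbation step: I must ensure that among the $\alpha$-perturbations of $\alpha_0$ at least one direction splits the tangent intersection at $P$ into two real points rather than a complex-conjugate pair; if neither direction does so for perturbations within family $\alpha$, I run the symmetric argument with $\beta$ perturbed instead, or pass to $\mathbb{P}^3(\mathbb{C})$ where the numerical count of intersections versus tangencies gives the same contradiction. Once this technicality is handled, the foliation property together with the Bezout bound for non-coplanar circles closes the argument.
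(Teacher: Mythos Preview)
Your argument is correct and shares its core idea with the paper: perturb one circle from a family and observe that the tangent intersection with a circle from the other family splits into two nearby points, then use the foliation property to identify those as the family-circles through the new points. However, the paper's execution is shorter and sidesteps precisely the ``main obstacle'' you flag. Rather than arguing by contradiction from an open set of tangencies, the paper simply shows that every tangency point $P$ is a limit of transversal points: it moves $P$ to a nearby $P'$ along the normal direction, draws \emph{both} new circles $\alpha_\epsilon,\beta_\epsilon$ through $P'$, and observes that one of the pairs $(\alpha_\epsilon,\beta_0)$ or $(\alpha_0,\beta_\epsilon)$ necessarily acquires two distinct intersection points near $P$. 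This two-pair trick makes the sign of the perturbation irrelevant, so no case analysis on direction is needed. (Your fallback to $\mathbb{P}^3(\mathbb{C})$ would not work as stated, since complex intersection points lie off the real surface and the tangency hypothesis does not apply there; fortunately it is not needed, because two distinct circles always have contact order exactly~$2$, so one real direction always splits the intersection.)

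One further simplification: your final B\'ezout step is unnecessary. Once the multiplicity-two intersection splits into two distinct points $P_\epsilon^1,P_\epsilon^2$, conservation of local intersection number forces both to be \emph{transversal} intersections of $\alpha_\epsilon$ and $\beta_0$. So the tangency-everywhere hypothesis is contradicted already at $P_\epsilon^1$, before any global count of intersection points is invoked.
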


\begin{proof} Assume that the circles $\alpha_0$ and $\beta_0$ passing trough the point $P$ touch each other; see Figure~\ref{Cl3_3}. Move the point $P$ slightly along a geodesic in the normal direction to the circles. Draw the circles $\alpha_{\epsilon}$ and $\beta_{\epsilon}$ through the resulting point $P'$ of $\Phi$.
Then one of the pairs $\alpha_{\epsilon}, \beta_{0}$ or $\alpha_{0}, \beta_{\epsilon}$ has $2$ distinct intersection points near $P$. Hence there are points arbitrarily close to $P$ such that the circles through them are transversal.
\end{proof}

\begin{proof}[Proof of Theorem~\ref{takeuchi}] 
The parity of the number of intersection points (with multiplicities) does not depend on the choice of particular curves from the families. Thus by Claim~\ref{topological} there are $3$ families $\alpha_t$, $\beta_t$, $\gamma_t$ of circles in $\Phi$ such that the circles from distinct families intersect in an even number of points (with multiplicities). Assume without loss of generality that the circles $\alpha_0$, $\beta_0$, $\gamma_0$ pass through a generic point $P\in \Phi$.
Then by Claim~\ref{generic} the circles $\alpha_0$, $\beta_0$, $\gamma_0$ intersect transversally.

Then for sufficiently small $t$ any two of the circles $\alpha_0$, $\beta_0$, $\gamma_t$ have exactly two common points. Circles with $2$ common points are cospherical. Thus $\alpha_0$ and $\beta_0$ belong to one sphere. Each circle $\gamma_t\not\ni P$ belongs to the same sphere because it has at least $3$ common points with the sphere ($2$ pairs of intersection points with $\alpha_0$ and $\beta_0$, minus at most $1$ common point of the pairs).
Since $\Phi$ is a smooth surface, it cannot jump from one sphere to another. Thus $\Phi$ is a sphere itself; cf.~\cite[Theorem~1]{Takeuchi-85}.
\end{proof}

Now let us prove a new result generalizing an old one of Darboux.

\begin{theorem}\label{3families}
Let $\Phi\subset\mathbb{R}^3$ be a smooth closed surface homeomorphic to either a sphere or a torus.
If through each point of the surface one can draw at least $4$ distinct circles fully contained in the surface (and continuously depending on the point) then the surface is a cyclide.
\end{theorem}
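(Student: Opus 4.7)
The plan is to reduce Theorem~\ref{3families} to Theorem~\ref{cospherical} on cospherical circles, using the mod-$2$ homological pigeonhole from the proof of Claim~\ref{topological}.

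First I apply that pigeonhole globally to the four given families. Each family represents a single class in $H_1(\Phi;\mathbb{Z}/2)$, a group of order $1$ (if $\Phi$ is a sphere) or $4$ (if $\Phi$ is a torus). In either case the classes lie in one of the three subsets $\{0,[\xi]\}$, $\{0,[\eta]\}$, $\{0,[\xi]+[\eta]\}$ considered in the proof of Claim~\ref{topological}, whose members have vanishing pairwise intersection modulo~$2$. By the pigeonhole principle, at least $\lceil 4/3\rceil=2$ of the four families fall into a common subset; call them $\alpha_t$ and $\beta_t$. Hence $\alpha_s$ and $\beta_t$ intersect in an even number of points (counted with multiplicities) for all $s,t$.

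Second, I show these two families furnish a cospherical pair of circles through every point. Fix a generic $P\in\Phi$: the circles $\alpha_0,\beta_0$ through $P$ are distinct (the four families give four distinct circles through each point by hypothesis) and transversal by Claim~\ref{generic}. Two distinct circles in $\mathbb{R}^3$ share at most $2$ points, and evenness combined with a simple intersection at $P$ forces $|\alpha_0\cap\beta_0|=2$. Any two circles meeting in two points are cospherical (they lie on a common sphere, or on a common plane viewed as a sphere at infinity), so $\alpha_0$ and $\beta_0$ are cospherical. By continuity of the two families and density of generic points, the circles $\alpha_t,\beta_t$ through every point of $\Phi$ are cospherical.

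Third, Theorem~\ref{cospherical} applied to the two continuous families $\alpha_t$ and $\beta_t$ of cospherical circles through each point of $\Phi$ yields that $\Phi$ is a cyclide.

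The main obstacle I expect is this density/continuity step: ensuring that the two selected families really provide two \emph{distinct} and \emph{cospherical} circles at \emph{every} point, not only generic ones. The potential degeneracies (momentary coincidence of two families, or failure of transversality) are concentrated on a lower-dimensional locus, so closedness of the cospherical condition combined with smoothness and connectedness of $\Phi$, together with the algebraic nature of being a cyclide, should handle these edges.
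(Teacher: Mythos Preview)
Your proposal is correct and follows essentially the same route as the paper: select two of the four families with even pairwise intersection via the mod-$2$ homological pigeonhole (Claim~\ref{topological}), use Claim~\ref{generic} to get transversality at a generic point, deduce cosphericity from having exactly two intersection points, and invoke Theorem~\ref{cospherical}. The paper is terser---it only asserts cosphericity ``for $s,t$ sufficiently close to zero'' and then applies Theorem~\ref{cospherical} directly---whereas you spell out the density/continuity passage to every point; your extra care there is warranted but does not change the argument.
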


\begin{proof}[Proof of Theorem~\ref{3families}] Let $\alpha_t$, $\beta_t$, $\gamma_t$, $\delta_t$ be the $4$ families of circles in the surface $\Phi$. Assume without loss of generality that the circles $\alpha_0$, $\beta_0$, $\gamma_0$, $\delta_0$ pass through a generic point $P$.
Since $\Phi$ is topologically either a sphere or a torus by~Claim~\ref{topological} it follows that we can choose $2$ families, say, $\alpha_t$ and $\beta_t$, such that for each $s,t$ the circles $\alpha_t$ and $\beta_s$ have an even number of intersection points (with multiplicities). By Claim~\ref{generic} the circles $\alpha_0$ and $\beta_0$ are transversal. Thus for $s$, $t$ sufficiently close to zero the circles $\alpha_t$ and $\beta_s$ are cospherical. 
Now the theorem follows from the following classical result.
\end{proof}

\begin{theorem}\label{cospherical} \textup{(See~\cite[Theorem 20 in p.~296]{Coolidge})}
If through each point of a smooth surface in $\mathbb{R}^3$ one can draw $2$ cospherical circular arcs fully contained in the surface (and continuously depending on the point) then the surface is a cyclide.
\end{theorem}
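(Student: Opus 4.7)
The plan is to show that $\Phi$ lies in an algebraic surface $\bar\Phi\subset\mathbb{P}^3$ whose defining polynomial has the shape~\eqref{D}. The strategy parallels the proof of Theorem~\ref{ruled}, with the two families of circles playing symmetric roles, and the cospherical hypothesis pinpointing the cyclide form.

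\emph{Algebraicity.} I would adapt Claim~\ref{algebraic} from lines to circles. The set of complex circles passing through several fixed irreducible algebraic curves in $\mathbb{P}^3$ is a constructible algebraic subset of the $6$-dimensional algebraic variety of circles, and its union is an algebraic surface (circle-analogues of Claim~\ref{harris}). Starting from a few generating circles of one family near a generic $P\in\Phi$ and running the argument of Claim~\ref{algebraic}, one obtains an irreducible algebraic surface $\bar\Phi\subset\mathbb{P}^3$ containing $\Phi$; smoothness of $\Phi$ prevents jumping between irreducible components.

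\emph{The absolute conic lies on $\bar\Phi$.} Every complex circle meets the absolute conic $C_\infty$ in two points, so the $1$-parameter family of complexified generating circles sweeps a $1$-dimensional algebraic subset of the irreducible conic $C_\infty$, which must therefore equal all of $C_\infty$. Hence $C_\infty\subset\bar\Phi$.

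\emph{Cyclide form via cospherical circles.} For a generic $P\in\Phi$ let $S_P$ be the sphere containing $\alpha_P\cup\beta_P$. Since both $\bar\Phi$ and $S_P$ contain $C_\infty$, Bezout gives $\bar\Phi\cap S_P$ total degree $2\deg\bar\Phi$, including $C_\infty$ with some intersection multiplicity plus $\alpha_P\cup\beta_P$ of total degree~$4$. As $P$ varies, $S_P$ varies in a $2$-parameter family; pushing through the resulting constraints --- and using that the residual intersection $(\bar\Phi\cap S_P)\setminus(C_\infty\cup\alpha_P\cup\beta_P)$ cannot generically contain extra components, lest $\bar\Phi$ be swept by more circles than it carries --- should force $\deg\bar\Phi\leq 4$ and that the defining polynomial $F$ of $\bar\Phi$ lies in the square of the ideal $(x^2+y^2+z^2,\,w)$ of $C_\infty$. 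The latter condition is equivalent to $F$ having the form $a(x^2+y^2+z^2)^2 + (x^2+y^2+z^2)(bx+cy+dz)w + Q(x,y,z,w)w^2$ for some constants $a,b,c,d$ and degree-$2$ form $Q$, whose dehomogenization is precisely~\eqref{D}.

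The main obstacle is the third step: the Bezout and intersection-multiplicity analysis along $C_\infty$ is delicate, and ruling out extraneous residual components of $\bar\Phi\cap S_P$ as $P$ varies is the crux. An alternative approach, using that cyclides are precisely inversions of quadrics, would invert $\Phi$ at a generic $P_0\in\Phi$: the cospherical circles through $P_0$ become a coplanar pair of lines and the sphere $S_{P_0}$ becomes a plane. If one could extend this local picture to a line-plus-circle structure at every point of the inverted surface, Theorem~\ref{ruled} would force the latter to be a quadric and hence $\Phi$ to be a cyclide.
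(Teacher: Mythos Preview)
Your approach is genuinely different from the paper's, and it contains gaps beyond the one you flag.

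\textbf{Step 2 is not justified.} A $1$-parameter family of circles need not sweep out a $1$-dimensional subset of $C_\infty$: if the circles lie in parallel planes they all pass through the \emph{same} two points of $C_\infty$, so the swept set is $0$-dimensional. This is not a pathological scenario; it is exactly what happens for the circles on a one-sheeted hyperboloid. In particular, for degree-$2$ cyclides other than the sphere the conclusion $C_\infty\subset\bar\Phi$ is simply false, so your argument cannot go through uniformly. You would at minimum need to split off the case where the intersection points with $C_\infty$ are constant along the family.

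\textbf{Step 3 is, as you say, the crux, and it is not carried out.} Even granting $C_\infty\subset\bar\Phi$, Bezout on $\bar\Phi\cap S_P$ does not by itself bound $\deg\bar\Phi$ or force the ideal condition: you need control of the multiplicity of $C_\infty$ in $\bar\Phi$, the multiplicity of $C_\infty$ in $\bar\Phi\cap S_P$, and the absence of residual components for generic $P$. None of these are established.

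The paper's proof avoids algebraicity and Bezout entirely. Its key step is to produce a single sphere $\Sigma$ orthogonal to \emph{every} circle of both families. One fixes $\Sigma$ orthogonal to two circles $\alpha_1,\alpha_2$ of the first family; for each $\beta_t$, the cospherical hypothesis gives spheres $\Sigma_{1t}\supset\alpha_1\cup\beta_t$ and $\Sigma_{2t}\supset\alpha_2\cup\beta_t$, each orthogonal to $\Sigma$ (since $\alpha_i\perp\Sigma$), hence their intersection circle $\beta_t$ is orthogonal to $\Sigma$; symmetrically every $\alpha_t\perp\Sigma$. Normalizing $\Sigma$ to the unit sphere, the Darboux transformation $(x,y,z)\mapsto 2(x,y,z)/(x^2+y^2+z^2+1)$ sends circles orthogonal to $\Sigma$ to line segments, so $\Phi$ is mapped to a doubly ruled surface, hence a quadric; its preimage is a cyclide.

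Your inversion-at-a-point alternative is in the right spirit but only straightens the two circles through $P_0$; the other circles remain circles, so Theorem~\ref{ruled} is not available. The paper's insight is to find a transformation that straightens \emph{all} generating circles simultaneously, and the existence of the common orthogonal sphere $\Sigma$ is precisely what makes this possible.
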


\begin{proof} [Sketch of the proof]
The surface is covered by two families of circular arcs $\alpha_t$ and $\beta_t$.
Fix a sphere $\Sigma$ orthogonal to both $\alpha_1$ and $\alpha_2$.
Take the spheres $\Sigma_{1t}$ and $\Sigma_{2t}$ containing $\alpha_1\cup\beta_t$ and $\alpha_2\cup\beta_t$, respectively. Since $\alpha_1 \perp \Sigma$ it follows that $\Sigma_{1t} \perp \Sigma$. Analogously,
$\Sigma_{2t} \perp \Sigma$. If $\Sigma_{1t}=\Sigma_{2t}$ for each $t$ then $\Phi$ is a sphere through the circles $\alpha_1$ and $\alpha_2$. Otherwise $\beta_t\subset\Sigma_{1t}\cap\Sigma_{2t} \perp \Sigma$ for each $t$. 
Analogously $\alpha_t\perp \Sigma$ for each $t$. Assume without loss of generality that $\Sigma$ is the unit sphere. Perform the {\it Darboux transformation} $(x,y,z)\mapsto 2(x,y,z)/(x^2+y^2+z^2+1)$ taking all circles orthogonal to $\Sigma$ to line segments. The transformation takes our surface to a doubly ruled surface. Thus our surface is the preimage of a quadric under the Darboux transformation, i.e., a cyclide.
\end{proof}

Now let us give examples showing nontriviality of our results.
We begin with examples over the field of complex numbers.

\begin{example} \label{ex1} The irreducible complex cyclide $\left(x^2+y^2+z^2\right)^2+(x+iy)^2-z^2=0$, which can be parametrized in $\mathbb{P}^3$ as $t^2-1:i(t^2-1-2st):s(t^2+1):s(t^2-1)+4t$,
is covered by a family of complex lines $t=\mathrm{const}$
and a family of complex circles  $s=\mathrm{const}$ simultaneously.
\end{example}




\begin{example}\label{ex3} A general position degree $3$ complex cyclide is covered by a family of complex circles and contains $27$ complex lines; however, the surface contains no families of complex lines.
\end{example}

\begin{proof} Any cyclide is covered by at least one family of complex circles \cite[Chapter~VII]{Coolidge}. A general position degree $3$ cyclide is nonsingular and hence contains exactly $27$ complex lines.
\end{proof}

\begin{example} \label{ex2} A general position ruled complex cubic surface 
is covered by a family of complex lines and contains $15$ complex circles; however, the surface contains no families of complex circles.
\end{example}

\begin{proof}[Proof of Example~\ref{ex2}] Consider the intersection $I$ of the ruled cubic surface with the absolute conic. In general position it consists of six distinct points. Let $P$, $Q$ be two of the intersection points. Let $\lambda_1$ be the line passing through $P$, $Q$, and let $R$ be the third common point of $\lambda_1$ and the surface. In general position $R\ne P,Q$.
Consider the ruling $\lambda_2$ passing through $R$. Take the plane $\Pi$ containing the lines $\lambda_1$ and $\lambda_2$. The intersection of $\Pi$ and the surface consists of the ruling $\lambda_2$ and a curve $\gamma$ of degree $2$. The curve $\gamma$ is irreducible once the plane $\Pi$ contains neither the singular line nor the isolated line of the surface,
i.e., the points $P$, $Q$, $R$ do not belong to these lines (this follows from the classification of ruled cubic surfaces \cite[Section 2]{Polo-Blanko-Put-Top-09}). Thus in general position $\gamma$ is a conic through $P$ and $Q$, i.e., a complex circle. There are $15$ ways to choose two distinct points $P,Q\in I$ leading to $15$ complex circles on the surface.
\end{proof}

Finally, let us proceed to examples over the field of real numbers. Their obvious proofs are omitted.

\begin{example} \label{ex5} (See Figure~\ref{fig-examples} to the left.)
The surface $(x^2-z^2)(3z-2)+(y-z)(3yz-2y-4z+2)=0$ is covered by a family of circles in the planes $z=\mathrm{const}$ and contains $4$ lines: $l_1(t)=(t,t,t)$, $l_2(t)=(-t,t,t)$, $l_3(t)=({t},1-{t},2t)$, $l_4(t)=(-{t},1-{t},2t)$; however, the surface contains no families of lines.
\end{example}

\begin{example} \label{ex4} (See \cite[Section~1]{Pottmann} and Figure~\ref{fig-examples} in the middle.) The surface
 $(x^2+y^2+z^2+3)^2 - 4y^2z^2 - 16x^2 - 12y^2=0$ obtained by translation of a circle along another one is covered by $2$ families of circles in the planes $y=\mathrm{const}$ and $z=\mathrm{const}$ but it is not a cyclide.
\end{example}


It is not known so far if there are surfaces covered by $3$ families of circles besides cyclides. Let us give a related counterexample from \emph{isotropic geometry} \cite{Pottmann-2009}. An \emph{isotropic circle} is either a parabola with the axis parallel to~$Oz$ or an ellipse whose projection into the plane $Oxy$ is a circle. An \emph{isotropic cyclide} is given by equation~\eqref{D}, in which both instances of $(x^2+y^2+z^2)$ are replaced by $(x^2+y^2)$ \cite[Section~5]{Pottmann}.

\begin{example} \label{ex6} (See Figure~\ref{fig-examples} to the right.) The surface $z=xy(y-x)$ is covered by $3$ families of isotropic circles in the planes $x=\mathrm{const}$, $y=\mathrm{const}$, $y-x=\mathrm{const}$ but it is not an isotropic cyclide. 
\end{example}

\begin{figure}[htbp]
\includegraphics[width=0.36\textwidth]{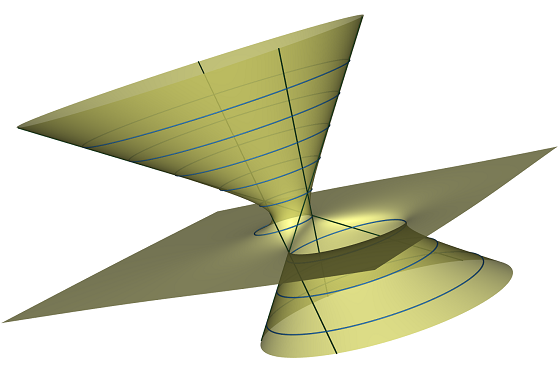}\qquad
\includegraphics[width=0.24\textwidth]{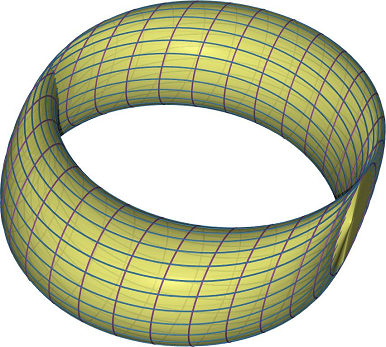}
\qquad
\includegraphics[width=0.22\textwidth]{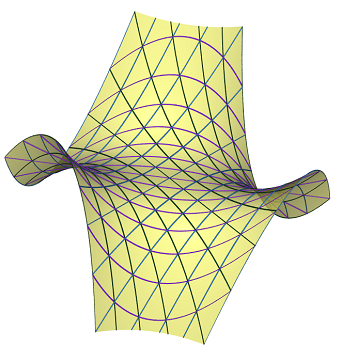}\qquad
\caption{
Left: a surface covered by $1$ family of circles and containing $4$ lines (Example~\ref{ex5}). Middle: a surface covered by $2$ families of circles (Example~\ref{ex4}). Right: a surface covered by $3$ families of isotropic circles (Example~\ref{ex6}).}
\label{fig-examples}
\end{figure}

\section{Acknowledgements}

The authors are grateful to S.~Gusein-Zade, R.~Krasauskas, H.~Pottmann,  I.~Sabitov for useful discussions and to L.~Shi for the first figure.

\end{document}